\begin{document}
\title[\hfilneg Quartet fixed point theorems for nonlinear contractions in partially ordered sets \hfil ]
{Quartet  fixed point theorems for nonlinear contractions in partially ordered metric spaces}
\author[ E. Karapinar \hfil  \hfilneg]{Erdal Karapinar}

\address{erdal karap\i nar,  \newline
Department of Mathematics, Atilim University 06836, \.Incek, Ankara, Turkey}
\email{\href{mailto:erdalkarapinar@yahoo.com}{erdalkarapinar@yahoo.com}}
\email{\href{mailto:ekarapinar@atilim.edu.tr}{ekarapinar@atilim.edu.tr}}

\subjclass[2000]{47H10,54H25,46J10, 46J15}
\keywords{Fixed point theorems, Nonlinear contraction, Partially ordered, Quartet Fixed Point, mixed g monotone}

\thispagestyle{empty}

\numberwithin{equation}{section}
\newtheorem{theorem}{Theorem}
\newtheorem{acknowledgement}[theorem]{Acknowledgement}
\newtheorem{algorithm}[theorem]{Algorithm}
\newtheorem{axiom}[theorem]{Axiom}
\newtheorem{case}[theorem]{Case}
\newtheorem{claim}[theorem]{Claim}
\newtheorem{conclusion}[theorem]{Conclusion}
\newtheorem{condition}[theorem]{Condition}
\newtheorem{conjecture}[theorem]{Conjecture}
\newtheorem{corollary}[theorem]{Corollary}
\newtheorem{criterion}[theorem]{Criterion}
\newtheorem{definition}[theorem]{Definition}
\newtheorem{example}[theorem]{Example}
\newtheorem{exercise}[theorem]{Exercise}
\newtheorem{lemma}[theorem]{Lemma}
\newtheorem{notation}[theorem]{Notation}
\newtheorem{problem}[theorem]{Problem}
\newtheorem{proposition}[theorem]{Proposition}
\newtheorem{remark}[theorem]{Remark}
\newtheorem{solution}[theorem]{Solution}
\newtheorem{summary}[theorem]{Summary}
\newcommand{\IR}{\mbox{I \hspace{-0.2cm}R}}
\newcommand{\IN}{\mbox{I \hspace{-0.2cm}N}}

\begin{abstract}
The notion of coupled fixed point is introduced in by Bhaskar and Lakshmikantham in \cite{BL_2006}.
Very recently, the concept of tripled fixed point is introduced by Berinde and Borcut \cite{BB2011}.
In this manuscript, by using the mixed $g$ monotone mapping, some new quartet fixed point theorems are obtained.
We also give some examples to support our results.
\end{abstract}

\maketitle

\section{Introduction and Preliminaries}

In 2006, Bhaskar and Lakshmikantham  \cite{BL_2006} introduced the notion of coupled fixed point and proved some fixed point theorem under certain condition. Later, Lakshmikantham and \'Ciri\'c  in \cite{LC_2009} extended these results by defining
of $g$-monotone property. After that many results appeared on coupled fixed point theory  (see e.g. \cite{LT2011,S2010,EK2011CAMWA,EK2011GUJS,CK2010,BCK2011}).

Very recently, Berinde and Borcut \cite{BB2011} introduced the concept of tripled fixed point and
proved some related theorems.
In this manuscript, the quartet fixed point is considered and
by using the mixed $g$-monotone mapping,  existence and uniqueness of quartet fixed point are obtained.

First we recall the basic definitions and results from which  quartet fixed point is inspired.
Let $(X,d)$ be a metric space and $X^2:=X\times X$. Then the mapping $\rho : X^2\times X^2 \rightarrow [0,\infty)$
such that $\rho((x_1,y_1),(x_2,y_2)):=d(x_1,x_2)+d(y_1,y_2)$ forms a metric on $X^2$.
A sequence $(\{x_n\},\{y_n\}) \in X^2$ is said to be a double sequence of $X$.

\begin{definition} (See \cite{BL_2006})
Let $(X, \leq )$ be partially ordered set and $F:X\times X \rightarrow
X$. $F$ is said to have mixed monotone property if $F(x,y)$ is
monotone nondecreasing in $x$ and is monotone non-increasing in $y$,
that is, for any $x,y \in X$,
\[\displaystyle x_1\leq  x_2 \Rightarrow F(x_1,y) \leq  F(x_2,y), \  \ \mbox{for} \ x_1,x_2 \in X, \ \ \mbox{and} \ \]
\[\displaystyle y_1\leq  y_2 \Rightarrow F(x,y_2) \leq  F(x,y_1), \ \mbox{for} \  y_1,y_2 \in X.\]
\label{defn_MMP}
\end{definition}

\begin{definition}(see \cite{BL_2006})
An element $(x,y)\in X \times X$ is said to be a coupled fixed point
of the mapping $F:X\times X \rightarrow X$ if
\[ F(x,y)=x \ \mbox{and} \ \  F(y,x)=y. \]
\label{defn_CFP}
\end{definition}

Throughout this paper,
let $(X, \leq )$ be partially ordered set and $d$ be a  metric on
$X$ such that $(X, d)$ is a complete metric space.
Further, the product spaces $X \times X$ satisfy the
following:
\begin{equation}
 (u,v) \leq  (x,y) \Leftrightarrow  u\leq  x,\  y\leq  v; \ \ \mbox{for all}\ \ (x,y),(u,v) \in X\times X.
  \label{order_couple}
  \end{equation}

The following two results of Bhaskar and Lakshmikantham in \cite{BL_2006} were extended to class of cone metric spaces
in \cite{EK2011GUJS}:

\begin{theorem}
Let $F:X \times X \rightarrow X$ be a continuous mapping having the mixed monotone property
on $X$. Assume that there exists a $k \in [0,1)$ with
\begin{equation}
d(F(x,y),F(u,v)) \leq \frac{k}{2} \left[d(x,u)+d(y,v)\right], \  \mbox{for all} \ u \leq  x, \ y \leq  v .
\label{eq_thm_s1}
\end{equation}
If there exist $x_0,y_0 \in X$ such that $x_0 \leq  F(x_0,y_0)$ and $F(y_0,x_0)\leq  y_0$,
then, there  exist $x,y \in  X$ such that $x=F(x,y)$ and $y=F(y,x)$.
\label{theorem_21}
\end{theorem}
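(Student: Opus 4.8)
The plan is to imitate the classical Banach iteration, but to run it on two coupled sequences simultaneously. First I would fix the given $x_0, y_0$ and define inductively
\[ x_{n+1} = F(x_n, y_n), \qquad y_{n+1} = F(y_n, x_n), \qquad n \geq 0. \]
The first task is to show, by induction on $n$, that $\{x_n\}$ is nondecreasing and $\{y_n\}$ is nonincreasing. The base case is exactly the hypothesis $x_0 \leq F(x_0,y_0) = x_1$ and $y_1 = F(y_0,x_0) \leq y_0$. For the inductive step I would invoke the mixed monotone property of Definition \ref{defn_MMP} twice: from $x_{n-1} \leq x_n$ and $y_n \leq y_{n-1}$ one obtains $x_n = F(x_{n-1}, y_{n-1}) \leq F(x_n, y_{n-1}) \leq F(x_n, y_n) = x_{n+1}$, and symmetrically $y_{n+1} \leq y_n$.

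Next I would control the consecutive distances. Setting $\delta_n := d(x_{n+1}, x_n) + d(y_{n+1}, y_n)$, the monotonicity just established places the iterates in the correct order to invoke the contraction \eqref{eq_thm_s1}. Applying it to the pair satisfying $(x_{n-1}, y_{n-1}) \leq (x_n, y_n)$ in the product order \eqref{order_couple} bounds $d(x_{n+1}, x_n)$, and applying it once more, after using the symmetry of $d$ to interchange the two arguments, bounds $d(y_{n+1}, y_n)$. Adding the two estimates yields $\delta_n \leq k\, \delta_{n-1}$, whence $\delta_n \leq k^n \delta_0$. Since $k < 1$, the series $\sum_n \delta_n$ converges, and a routine triangle-inequality argument then shows that both $\{x_n\}$ and $\{y_n\}$ are Cauchy sequences in $X$.

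Finally, completeness of $(X,d)$ supplies limits $x_n \to x$ and $y_n \to y$ for some $x, y \in X$. Passing to the limit in the defining recursion and using the continuity of $F$ produces $x = \lim x_{n+1} = F(\lim x_n, \lim y_n) = F(x,y)$, and likewise $y = F(y,x)$, so that $(x,y)$ is the desired coupled fixed point. I expect the main obstacle to be bookkeeping rather than a conceptual hurdle: one must verify at each stage that the order requirements $u \leq x$ and $y \leq v$ demanded by \eqref{eq_thm_s1} are genuinely met by the iterates, which is precisely why the preliminary monotonicity step is indispensable. Once that is in place, the contraction estimate, the Cauchy conclusion, and the limit passage are all standard.
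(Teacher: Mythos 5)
Your proposal is correct, and it is the standard argument. Note that the paper itself offers no proof of Theorem \ref{theorem_21}: it is quoted as background from Bhaskar and Lakshmikantham \cite{BL_2006}, and only the quartet analogue (Theorem \ref{main}) is proved. Your scheme — iterate $x_{n+1}=F(x_n,y_n)$, $y_{n+1}=F(y_n,x_n)$, establish monotonicity of the iterates by induction via the mixed monotone property, bound $\delta_n = d(x_{n+1},x_n)+d(y_{n+1},y_n)$ by the contraction, conclude Cauchyness, and pass to the limit using continuity — is exactly the scheme the paper deploys for its main theorem, and your care in checking that the order requirements $u\leq x$, $y\leq v$ are met before invoking \eqref{eq_thm_s1} (including the symmetry-of-$d$ step for the $y$-sequence) is precisely the point where such arguments usually go wrong, and yours does not. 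The one structural difference is worth noting: because your contraction constant is linear ($k/2$ with $k<1$), the estimate $\delta_n \leq k^n \delta_0$ sums geometrically and Cauchyness is immediate; in the paper's nonlinear setting with $\phi\in\Phi$, no such summability is available, so the paper must first show $\delta_n \to 0$ via a limit argument and then prove Cauchyness by contradiction, extracting subsequences with $n(k)>m(k)\geq k$ violating the Cauchy condition. Your simpler route is a genuine benefit of the stronger (linear) hypothesis of Theorem \ref{theorem_21} and would not carry over to the paper's main result.
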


\begin{theorem}
Let $F:X \times X \rightarrow X$ be a  mapping having the mixed monotone property
on $X$. Suppose that $X$ has the following properties:
\begin{enumerate}
\item[$(i)$]  if a non-decreasing sequence $\{x_n\}\rightarrow x$, then $x_n \leq  x, \ \forall n;$
\item[$(i)$]  if a non-increasing sequence $\{y_n\}\rightarrow y$, then $y \leq  y_n, \ \forall n.$
\end{enumerate}
Assume that there exists a $k \in [0,1)$ with
\begin{equation}
d(F(x,y),F(u,v)) \leq \frac{k}{2} \left[d(x,u)+d(y,v)\right], \  \mbox{for all} \ u \leq  x, \ y \leq  v .
\label{eq_thm22_s1}
\end{equation}
If there exist $x_0,y_0 \in X$ such that $x_0 \leq  F(x_0,y_0)$ and $F(y_0,x_0)\leq  y_0$,
then, there  exist $x,y \in  X$ such that $x=F(x,y)$ and $y=F(y,x)$.
\label{theorem_22}
\end{theorem}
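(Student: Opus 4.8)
The plan is to reuse the Picard-type coupled iteration behind Theorem~\ref{theorem_21}, keeping every step intact except the final passage to the limit, where the continuity of $F$ is replaced by the two regularity hypotheses on $X$.

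First I would define sequences $\{x_n\}$ and $\{y_n\}$ by
\[
x_{n+1} = F(x_n, y_n), \qquad y_{n+1} = F(y_n, x_n), \qquad n \geq 0,
\]
starting from the given $x_0, y_0$. Using the initial inequalities $x_0 \leq F(x_0,y_0)$ and $F(y_0,x_0) \leq y_0$ together with the mixed monotone property, a routine induction shows that $\{x_n\}$ is non-decreasing and $\{y_n\}$ is non-increasing.

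Next I would show the iteration contracts in the combined distance. Setting $\delta_n := d(x_n, x_{n+1}) + d(y_n, y_{n+1})$ and applying (\ref{eq_thm22_s1}) to the consecutive ordered pairs (the monotonicity from the previous step supplies the required comparabilities $x_n \leq x_{n+1}$ and $y_{n+1} \leq y_n$), one obtains $\delta_{n+1} \leq k\,\delta_n$, hence $\delta_n \leq k^n \delta_0$. A telescoping estimate with the triangle inequality then shows that $\{x_n\}$ and $\{y_n\}$ are Cauchy, and completeness of $(X,d)$ yields limits $x_n \to x$ and $y_n \to y$.

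The decisive, and only novel, step is checking that $(x,y)$ is a coupled fixed point without invoking continuity. Because $\{x_n\}$ is non-decreasing with limit $x$, the first regularity hypothesis gives $x_n \leq x$ for all $n$; because $\{y_n\}$ is non-increasing with limit $y$, the second gives $y \leq y_n$ for all $n$. These comparabilities are exactly what (\ref{eq_thm22_s1}) requires when applied to the pairs $(x_n,y_n)$ and $(x,y)$, so for every $n$,
\[
d(x, F(x,y)) \leq d(x, x_{n+1}) + d\bigl(F(x_n,y_n), F(x,y)\bigr) \leq d(x, x_{n+1}) + \frac{k}{2}\left[d(x_n,x) + d(y_n,y)\right].
\]
Letting $n \to \infty$ kills the right-hand side, whence $x = F(x,y)$; a symmetric estimate gives $y = F(y,x)$. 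The main obstacle here is conceptual rather than computational: one must recognize that the monotonicity of the iterates, combined with the closed-order hypotheses on $X$, furnishes precisely the ordering needed to push the contraction inequality through the limit, playing the role that continuity of $F$ played in Theorem~\ref{theorem_21}.
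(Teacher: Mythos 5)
Your proof is correct, and since the paper states Theorem \ref{theorem_22} only as a preliminary quoted from \cite{BL_2006} without reproducing its proof, the natural comparison is with the paper's own argument for Theorem \ref{main}, which follows exactly your scheme: monotone Picard iteration, a contraction estimate ($\delta_{n+1}\leq k\,\delta_n$ in your linear setting, $\delta_{n+1}\leq 4\phi(\delta_n/4)$ in the paper's $\phi$-contraction setting), Cauchyness plus completeness, and then, in the non-continuous case (b), the same use of the order-regularity hypotheses to supply the comparabilities $x_n\leq x$, $y\leq y_n$ needed to push the contraction inequality through the limit via the triangle inequality. Your final step is carried out cleanly and the required orderings are checked correctly, so nothing is missing.
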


Inspired by Definition \ref{defn_MMP}, the following concept of a $g$-mixed monotone mapping introduced by
V. Lakshmikantham and L.\'Ciri\'c \cite{LC_2009}.

\begin{definition}
Let $(X, \leq )$ be partially ordered set and 
$F:X\times X \rightarrow X$ and $g:X\rightarrow X$.
$F$ is said to have mixed $g$-monotone property if $F(x,y)$ is
monotone $g$-non-decreasing in $x$ and is monotone $g$-non-increasing in $y$,
that is, for any $x,y \in X$,
\begin{equation}
g(x_1)\leq  g(x_2) \Rightarrow F(x_1,y) \leq  F(x_2,y), \  \ \mbox{for} \ x_1,x_2 \in X, \ \ \mbox{and} \
\label{eq_LC3}
\end{equation}
\begin{equation}
g(y_1)\leq  g(y_2) \Rightarrow F(x,y_2) \leq  F(x,y_1), \ \mbox{for} \  y_1,y_2 \in X.
\label{eq_LC4}
\end{equation}
\label{defn_gMMP}
\end{definition}
It is clear that Definition 13 reduces to Definition 9 when $g$ is the identity.
\begin{definition}
An element $(x,y)\in X \times X$ is called a couple point of a mapping 
$F:X\times X \rightarrow X$ and $g:X\rightarrow X$ if
\[F(x,y)=g(x), \ \ \ \ F(y,x)=g(y).\]
\label{defn_CCP}
\end{definition}
\begin{definition}
Let $F:X\times X \rightarrow X$ and $g:X\rightarrow X$ where $X \neq \emptyset$. 
The mappings $F$ and $g$ are said to commute
if
\[g(F(x,y))=F(g(x),g(y)), \ \ \ \mbox{for all}\ x,y \in X.\]
\label{defn_com_gF}
\end{definition}

\begin{theorem}
Let $(X, \leq )$ be partially ordered set and $(X,d)$ be a complete metric space
and also $F:X\times X \rightarrow X$ and $g:X\rightarrow X$ where $X \neq \emptyset$.
Suppose that $F$ has the mixed $g$-monotone property and that there exists  a $k\in [0,1)$ with
\begin{equation}
d(F(x,y),F(u,v)) \leq \frac{k}{2}\left[\frac{d(g(x),g(u))+d(g(y),g(v))}{2}\right]
\label{eq_thm_cor_LC5}
\end{equation}
for all $x,y,u,v \in X$ for which $g(x)\leq  g(u)$ and $g(v)\leq  g(y)$.
Suppose $F(X\times X) \subset g(X)$, $g$ is sequentially continuous and commutes with $F$
and also suppose either $F$ is continuous or $X$ has the following property:
\begin{equation}
\mbox{if a non-decreasing sequence} \ \{x_n\} \rightarrow x,\ \mbox{then} \ x_n \leq  x, \ \mbox{for all} \ n,
\label{eq_thm_cor_LC6}
\end{equation}
\begin{equation}
\mbox{if a non-increasing sequence} \ \{y_n\} \rightarrow y,\ \mbox{then} \ y \leq  y_n, \ \mbox{for all} \ n.
\label{eq_thm_cor_LC7}
\end{equation}
If there exist $x_0,y_0 \in X$ such that $g(x_0)\leq  F(x_0,y_0)$ and $g(y_0)\leq  F(y_0,x_0)$, then there exist
$x,y \in X$ such that $g(x)=F(x,y)$ and $g(y)=F(y,x)$, that is, $F$ and $g$ have a couple coincidence.
\label{thm_cor_LC21}
\end{theorem}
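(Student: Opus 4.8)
The plan is to run a Picard-type iteration in both variables. Since $F(X\times X)\subset g(X)$, starting from the given $x_0,y_0$ I can choose recursively $x_{n+1},y_{n+1}\in X$ with $g(x_{n+1})=F(x_n,y_n)$ and $g(y_{n+1})=F(y_n,x_n)$; this is the sole use of the range condition. First I would prove by induction, using the mixed $g$-monotone property of Definition~\ref{defn_gMMP} (that is, \eqref{eq_LC3}--\eqref{eq_LC4}) together with the initial inequalities relating $g(x_0),g(y_0)$ to $F(x_0,y_0),F(y_0,x_0)$, that $\{g(x_n)\}$ is nondecreasing and $\{g(y_n)\}$ is nonincreasing, i.e. $g(x_n)\le g(x_{n+1})$ and $g(y_{n+1})\le g(y_n)$ for every $n$.

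Next I would set $\delta_n:=d(g(x_n),g(x_{n+1}))+d(g(y_n),g(y_{n+1}))$ and feed the monotonicity into the contraction \eqref{eq_thm_cor_LC5}. Because $g(x_n)\le g(x_{n+1})$ and $g(y_{n+1})\le g(y_n)$, the hypotheses of \eqref{eq_thm_cor_LC5} are met for the pairs $(x_n,y_n)$ and $(x_{n+1},y_{n+1})$; estimating $d(g(x_{n+1}),g(x_{n+2}))$ and, after using the symmetry of $d$ in the second coordinate, $d(g(y_{n+1}),g(y_{n+2}))$, and adding, I expect to obtain $\delta_{n+1}\le \tfrac{k}{2}\,\delta_n$. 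Iterating gives $\delta_n\le (k/2)^n\delta_0$, and since $k/2<1$ a geometric-series argument with the triangle inequality shows $\{g(x_n)\}$ and $\{g(y_n)\}$ are Cauchy. By completeness there are $x,y\in X$ with $g(x_n)\to x$ and $g(y_n)\to y$.

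It remains to verify $g(x)=F(x,y)$ and $g(y)=F(y,x)$, and the argument splits according to the last hypothesis. Applying $g$ to the recursion and using that $g$ commutes with $F$ (Definition~\ref{defn_com_gF}) gives $g(g(x_{n+1}))=F(g(x_n),g(y_n))$. If $F$ is continuous, I would let $n\to\infty$: sequential continuity of $g$ makes the left side tend to $g(x)$, while continuity of $F$ makes the right side tend to $F(x,y)$, whence $g(x)=F(x,y)$, and symmetrically $g(y)=F(y,x)$. If instead $X$ satisfies \eqref{eq_thm_cor_LC6}--\eqref{eq_thm_cor_LC7}, then the monotonicity already established yields $g(x_n)\le x$ and $y\le g(y_n)$ for all $n$, and I would bound $d(g(x),F(x,y))$ by the triangle inequality through the anchor $g(g(x_{n+1}))=F(g(x_n),g(y_n))$: the term $d(g(x),g(g(x_{n+1})))$ vanishes by continuity of $g$, while $d(F(g(x_n),g(y_n)),F(x,y))$ is controlled by \eqref{eq_thm_cor_LC5} and then sent to $0$ using $g(g(x_n))\to g(x)$ and $g(g(y_n))\to g(y)$.

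I expect the regularity case to be the main obstacle. Unlike the continuous case, $F$ cannot be pushed past the limit, so the whole burden rests on invoking \eqref{eq_thm_cor_LC5} for the comparison between $F(g(x_n),g(y_n))$ and $F(x,y)$; the delicate point is to secure the order relations that this inequality requires from the raw comparisons $g(x_n)\le x$ and $y\le g(y_n)$, which is exactly where regularity, the continuity of $g$ and the commutativity of $F$ and $g$ must be combined. By contrast, the construction of the sequences, the monotonicity induction, the geometric Cauchy estimate and the continuous case are all comparatively routine.
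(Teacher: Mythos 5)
You are proving a statement that the paper itself never proves: Theorem \ref{thm_cor_LC21} is quoted in the preliminaries from Lakshmikantham and \'Ciri\'c \cite{LC_2009}, so the only in-paper comparison is the proof of its quartet analogue, Theorem \ref{main}, which follows exactly the template you propose. Within that template, your construction of the iterates, the monotonicity induction, the estimate $\delta_{n+1}\le\frac{k}{2}\delta_n$ (including the correct observation that the $y$-estimate needs the symmetry of $d$ before (\ref{eq_thm_cor_LC5}) can be invoked), the geometric-series Cauchy argument, and case (a) are all sound. One small point: as printed, the hypothesis reads $g(y_0)\le F(y_0,x_0)$, but your induction (and the original theorem in \cite{LC_2009}) requires $g(y_0)\ge F(y_0,x_0)$; with the inequality as printed, $\{g(y_n)\}$ need not be nonincreasing and the scheme stalls at the first step. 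You silently used the corrected inequality, and this should be stated explicitly.

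The genuine gap is case (b), which you flag as ``delicate'' but never close, and it is not a routine verification that was merely postponed: it fails under the stated hypotheses. To control $d(F(g(x_n),g(y_n)),F(x,y))$ by (\ref{eq_thm_cor_LC5}) you need the order relations $g(g(x_n))\le g(x)$ and $g(y)\le g(g(y_n))$, whereas regularity (\ref{eq_thm_cor_LC6})--(\ref{eq_thm_cor_LC7}) yields only $g(x_n)\le x$ and $y\le g(y_n)$. Passing these inequalities through $g$ requires $g$ to be order-preserving, which is nowhere assumed, and neither sequential continuity of $g$ nor its commutativity with $F$ supplies this. The alternative of estimating $d(g(x_{n+1}),F(x,y))=d(F(x_n,y_n),F(x,y))$ founders on the same rock, since it needs $g(x_n)\le g(x)$ while you only know $g(x_n)\le x$. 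So, as written, your argument proves the theorem only under hypothesis (a), or under (b) together with the extra assumption that $g$ is monotone nondecreasing --- which is in fact the standard repair in the literature. For what it is worth, the paper's own proof of Theorem \ref{main} makes exactly the same unjustified application of (\ref{thm1eqmain}) at (\ref{sondakka}): the relations between $g(g(x_n))$ and $g(x)$, etc., are never established (and (\ref{sondakka1}) even records the raw relations with reversed inequalities), and the same issue is present in \cite{LC_2009}. But identifying where the ingredients ``must be combined'' is not the same as combining them, so case (b) remains a genuine gap in your proof.
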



Berinde and Borcut \cite{BB2011} introduced the following partial order on the product space
$X^3=X\times X\times X$:
\begin{equation}
 (u,v,w)\leq (x,y,z) \mbox{ if and only if } x\geq u, \ y\leq v, \ z\geq w,
\end{equation}
where $(u,v,w), (x,y,z) \in X^3$.
Regarding this partial order, we state the definition of the following mapping.
\begin{definition} (See \cite{BB2011})
Let $(X, \leq )$ be partially ordered set and $F:X^3 \rightarrow X$.
We say that $F$ has the mixed monotone property if $F(x,y,z)$ is monotone
non-decreasing in $x$ and $z$, and it is monotone non-increasing in $y$, that is, for any $x,y,z \in X$
\begin{equation}
\begin{array}{r}
x_1,x_2 \in X, \ x_1 \leq x_2 \Rightarrow \ F(x_1,y,z) \leq F(x_2,y,z),\\
y_1,y_2 \in X, \ y_1 \leq y_2 \Rightarrow \ F(x,y_1,z) \geq F(x,y_2,z),\\
z_1,z_2 \in X, \ z_1 \leq z_2 \Rightarrow \ F(x,y,z_1) \leq F(x,y,z_2). \\
\end{array}
\label{ttommp}
\end{equation}
\label{ttmmp}
\end{definition}
\begin{theorem} (See \cite{BB2011})
Let $(X, \leq )$ be partially ordered set and $(X,d)$ be a complete metric space.
Let $F:X \times X \times X  \rightarrow X$ be a  mapping having the mixed monotone property on $X$.
Assume that there exist constants $a,b,c \in [0,1)$ such that $a+b+c<1$
for which
\begin{equation}
d(F(x,y,z),F(u,v,w)) \leq ad(x,u)+bd(y,v)+cd(z,w)
\label{thm2eqmainberinde}
\end{equation}
for all $x\geq u, \ y\leq v, \ z \geq w$. Assume that $X$ has the following
properties:
\begin{itemize}
\item[$(i)$] if non-decreasing sequence $x_n\rightarrow x$, then $x_n\leq x$  for all $n,$
\item[$(ii)$]if non-increasing sequence $y_n\rightarrow y$, then $y_n\geq y$  for all $n$,
\end{itemize}
If there exist $x_0,y_0,z_0 \in X$ such that
\[x_0\leq F(x_0,y_0,z_0), \ \ y_0 \geq F(y_0,x_0,y_0),\ \ \ z_0\leq F(x_0,y_0,z_0)\]
then there exist $x,y,z \in X$ such that
\[F(x,y,z)=x \mbox{  and  } F(y,x,y)=y \mbox{  and  } F(z,y,x)=z \]
\label{mainberinde2}
\end{theorem}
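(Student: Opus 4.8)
The plan is to mimic the monotone Banach-type iteration used in the coupled and $g$-coupled theorems above, now with three coupled sequences. Starting from the given $x_0,y_0,z_0$, I would define
\begin{equation}
x_{n+1}=F(x_n,y_n,z_n),\qquad y_{n+1}=F(y_n,x_n,y_n),\qquad z_{n+1}=F(z_n,y_n,x_n)
\end{equation}
for $n\geq 0$, so that a common fixed point of this iteration is exactly a tripled fixed point. The first step is to show, by induction on $n$, that $\{x_n\}$ is non-decreasing, $\{y_n\}$ is non-increasing, and $\{z_n\}$ is non-decreasing. The base case is precisely the hypothesis $x_0\leq F(x_0,y_0,z_0)$, $y_0\geq F(y_0,x_0,y_0)$, $z_0\leq F(z_0,y_0,x_0)$; the inductive step follows by feeding the comparabilities $x_{n-1}\leq x_n$, $y_{n-1}\geq y_n$, $z_{n-1}\leq z_n$ into the three monotonicity implications of Definition \ref{ttmmp}.

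Next I would control the consecutive displacements. Applying the contraction \eqref{thm2eqmainberinde} to each of the three recursions (the comparabilities it requires are exactly the monotonicities just established, and $d$ is symmetric) gives
\begin{align}
d(x_{n+1},x_n) &\leq a\,d(x_n,x_{n-1})+b\,d(y_n,y_{n-1})+c\,d(z_n,z_{n-1}),\\
d(y_{n+1},y_n) &\leq (a+c)\,d(y_n,y_{n-1})+b\,d(x_n,x_{n-1}),\\
d(z_{n+1},z_n) &\leq a\,d(z_n,z_{n-1})+b\,d(y_n,y_{n-1})+c\,d(x_n,x_{n-1}).
\end{align}
Here lies the one delicate point: because the middle recursion $y_{n+1}=F(y_n,x_n,y_n)$ feeds $y$ into two slots, naively summing the three lines produces the coefficient $a+2b+c$ on the $y$-term, which need not be $<1$ under the sole hypothesis $a+b+c<1$. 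I would therefore avoid summing and instead set $\Delta_n:=\max\{d(x_{n+1},x_n),\,d(y_{n+1},y_n),\,d(z_{n+1},z_n)\}$; each right-hand side above is then bounded by $(a+b+c)\Delta_{n-1}$, so $\Delta_n\leq \gamma\,\Delta_{n-1}$ with $\gamma:=a+b+c<1$, and hence $\Delta_n\leq \gamma^{\,n}\Delta_0$.

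With geometric decay of $\Delta_n$ in hand, the triangle inequality together with $\sum_{k\geq n}\gamma^{\,k}=\gamma^{\,n}/(1-\gamma)$ shows that each of $\{x_n\},\{y_n\},\{z_n\}$ is Cauchy; completeness of $(X,d)$ yields limits $x_n\to x$, $y_n\to y$, $z_n\to z$. It remains to verify $F(x,y,z)=x$, $F(y,x,y)=y$, $F(z,y,x)=z$. If $F$ is continuous this is immediate upon letting $n\to\infty$ in the recursions. In the non-continuous case I would invoke the regularity hypotheses, noting that property $(i)$ applies to \emph{both} non-decreasing sequences: since $\{x_n\}$ and $\{z_n\}$ increase to $x$ and $z$ we get $x_n\leq x$ and $z_n\leq z$, while property $(ii)$ applied to the non-increasing $\{y_n\}$ gives $y_n\geq y$. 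Then, for instance,
\begin{equation}
d\bigl(x,F(x,y,z)\bigr)\leq d(x,x_{n+1})+d\bigl(F(x_n,y_n,z_n),F(x,y,z)\bigr)\leq d(x,x_{n+1})+a\,d(x_n,x)+b\,d(y_n,y)+c\,d(z_n,z),
\end{equation}
which tends to $0$, forcing $F(x,y,z)=x$; the analogous estimates for the other two recursions close the argument. The main obstacle is the coefficient bookkeeping flagged above, resolved by passing to the maximum rather than the sum; the remainder is the now-standard monotone-iteration-plus-completeness scheme.
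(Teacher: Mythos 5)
This theorem is one the paper merely quotes from Berinde and Borcut \cite{BB2011} as background material; the manuscript contains no proof of it, so there is no internal proof to compare yours against, and I judge your argument on its own merits: it is correct. Your iteration $x_{n+1}=F(x_n,y_n,z_n)$, $y_{n+1}=F(y_n,x_n,y_n)$, $z_{n+1}=F(z_n,y_n,x_n)$ is the standard tripled scheme, and the three stages — monotonicity of the sequences by induction through the mixed monotone property, the geometric decay of successive displacements, and the passage to the limit using properties $(i)$ and $(ii)$ together with one application of the contraction \eqref{thm2eqmainberinde} — all check out. Two points are worth recording. First, you silently corrected a misprint in the statement: the hypothesis as printed reads $z_0\leq F(x_0,y_0,z_0)$, whereas your base case uses $z_0\leq F(z_0,y_0,x_0)$, which is what the induction actually requires and what \cite{BB2011} assumes; that correction is necessary, not optional. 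Second, your coefficient observation is a genuine subtlety handled well: because $y$ occupies two argument slots in $y_{n+1}=F(y_n,x_n,y_n)$, summing the three displacement inequalities yields the coefficient $a+2b+c$ on the $y$-term, which the sole hypothesis $a+b+c<1$ does not control; replacing the sum by $\Delta_n=\max\{d(x_{n+1},x_n),d(y_{n+1},y_n),d(z_{n+1},z_n)\}$, where each of the three right-hand sides is bounded by $(a+b+c)\Delta_{n-1}$, gives $\Delta_n\leq(a+b+c)^n\Delta_0$ and repairs the argument cleanly. One cosmetic remark: the theorem as stated here offers only the regularity properties $(i)$ and $(ii)$, not continuity of $F$ as an alternative, so your continuity branch is superfluous, though harmless.
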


The aim of this paper is introduce the concept of quartet fixed point and prove the related fixed point theorems.

\section{Quartet Fixed Point Theorems}

Let $(X, \leq )$ be partially ordered set and $(X,d)$ be a complete metric space. We state the definition of the following mapping. Throughout the manuscript we denote $X \times X \times X \times X$ by $X^4$.
\begin{definition} (See \cite{KL})
Let $(X, \leq )$ be partially ordered set and $F:X^4 \rightarrow X$.
We say that $F$ has the mixed monotone property if $F(x,y,z,w)$ is monotone
non-decreasing in $x$ and $z$, and it is monotone non-increasing in $y$ and $w$, that is, for any $x,y,z,w \in X$
\begin{equation}
\begin{array}{r}
x_1,x_2 \in X, \ x_1 \leq x_2 \Rightarrow \ F(x_1,y,z,w) \leq F(x_2,y,z,w),\\
y_1,y_2 \in X, \ y_1 \leq y_2 \Rightarrow \ F(x,y_1,z,w) \geq F(x,y_2,z,w),\\
z_1,z_2 \in X, \ z_1 \leq z_2 \Rightarrow \ F(x,y,z_1,w) \leq F(x,y,z_2,w), \\
w_1,w_2 \in X, \ w_1 \leq w_2 \Rightarrow \ F(x,y,z,w_1) \geq F(x,y,z,w_2). \\
\end{array}
\label{ommpa}
\end{equation}
\label{mmpa}
\end{definition}
\begin{definition}(See \cite{KL})
An element $(x,y,z,w) \in X^4$ is called a quartet fixed point of $F:X \times X \times X \times X \rightarrow X$ if
\begin{equation}
\begin{array}{rl}
F(x,y,z,w)=x,  & F(x,w,z,y)=y, \\ 
F(z,y,x,w)=z,  & F(z,w,x,y)=w.
\end{array}
\label{tfpa}
\end{equation}
\label{defntfpa}
\end{definition}
\begin{definition}
Let $(X, \leq )$ be partially ordered set and $F:X^4 \rightarrow X$.
We say that $F$ has the mixed $g$-monotone property if $F(x,y,z,w)$ is monotone
$g$-non-decreasing in $x$ and $z$, and it is monotone $g$-non-increasing in $y$ and $w$, 
that is, for any $x,y,z,w \in X$
\begin{equation}
\begin{array}{r}
x_1,x_2 \in X, \ g(x_1) \leq g(x_2) \Rightarrow \ F(x_1,y,z,w) \leq F(x_2,y,z,w),\\
y_1,y_2 \in X, \ g(y_1) \leq g(y_2) \Rightarrow \ F(x,y_1,z,w) \geq F(x,y_2,z,w),\\
z_1,z_2 \in X, \ g(z_1) \leq g(z_2) \Rightarrow \ F(x,y,z_1,w) \leq F(x,y,z_2,w), \\
w_1,w_2 \in X, \ g(w_1) \leq g(w_2) \Rightarrow \ F(x,y,z,w_1) \geq F(x,y,z,w_2). \\
\end{array}
\label{ommp}
\end{equation}
\label{mmp}
\end{definition}
\begin{definition}
An element $(x,y,z,w) \in X^4$ is called a quartet  coincidence point of $F:X^4 \rightarrow X$ 
and $g:X\rightarrow X $ if
\begin{equation}
\begin{array}{rl}
F(x,y,z,w)=g(x), & F(y,z,w,x)=g(y),\\
 F(z,w,x,y)=g(z), &  F(w,x,y,z)=g(w).
 \end{array}
\label{tfp}
\end{equation}
\label{defntfp}
\end{definition}

Notice that if $g$ is identity mapping, then Definition \ref{mmp} and
Definition\ref{defntfp}  reduce to Definition \ref{mmpa} and
Definition\ref{defntfpa}, respectively.
\begin{definition}
Let $F:X^4 \rightarrow X$ and $g: X \rightarrow X$.
$F$ and $g$ are called commutative if
\begin{equation}
g(F(x,y,z,w))= F(g(x),g(y),g(z),g(w)), \ \mbox{ for all } x,y,z,w \in X.\\
\label{commuting}
\end{equation}
\end{definition}

For a metric space $(X,d)$, the function $\rho:X^4 \times X^4 \rightarrow [0,\infty)$, given by,
\[\rho((x,y,z,w),(u,v,r,t)):=d(x,u)+d(y,v)+d(z,r)+d(w,t)\]
forms a metric space on $X^4$, that is, $(X^4,\rho)$ is a metric induced by $(X,d)$.

Let $\Phi$ denote the all functions $\phi:[0,\infty) \rightarrow [0,\infty)$ which is continuous and
satisfy that
\begin{itemize}
 \item[$(i)$] $\phi(t)<t$
\item[$(i)$] $\lim_{r \rightarrow t+} \phi(r)<t$ for each $ r>0$.
\end{itemize}

The aim of this paper is to prove the following theorem.
\begin{theorem}
Let $(X, \leq )$ be partially ordered set and $(X,d)$ be a complete metric space.
Suppose  $F:X^4 \rightarrow X$ and there exists $\phi\in \Phi$ such that
$F$ has the mixed $g$-monotone property and
\begin{equation}
d(F(x,y,z,w),F(u,v,r,t)) \leq  \phi\left(\frac{d(x,u)+d(y,v)+d(z,r)+d(w,t)}{4}\right)
\label{thm1eqmain}
\end{equation}
for all $x, u, y, v, z, r,w, t$ for which
$g(x)\leq g(u)$, $g(y)\geq g(v)$, $g(z)\leq g(r)$ and $g(w)\geq g(t)$.
Suppose there exist $x_0,y_0,z_0,w_0 \in X$ such that
\begin{equation}
\begin{array}{c}
g(x_0)\leq F(x_0,y_0,z_0,w_0),
\ \ g(y_0) \geq F(x_0,w_0,z_0,y_0), \\
\ \  g(z_0) \leq F(z_0,y_0,x_0,w_0),
\ \ g(w_0) \geq F(z_0,w_0,x_0,y_0).\\
\end{array}
\label{supposegg}
\end{equation}
Assume also that $F(X^4) \subset g(X)$ and $g$ commutes with $F$.
Suppose either
\begin{itemize}
\item[$(a)$] $F$ is continuous, or
\item[$(b)$] $X$ has the following property:
\begin{itemize}
\item[$(i)$] if non-decreasing sequence $x_n\rightarrow x$, then $x_n\leq x$ for all $n,$
\item[$(ii)$]if non-increasing sequence $y_n\rightarrow y$, then $y_n\geq y$ for all $n$,
\end{itemize}
\end{itemize}
then there exist $x,y,z,w \in X$ such that
\[
\begin{array}{c}
F(x,y,z,w)=g(x), \ \ \ \ F(x,w,z,y)=g(y), \\
F(z,y,x,w)=g(z), \ \ \ \ F(z,w,x,y)=g(w).
\end{array}
\]
\label{main}
that is, $F$ and $g$ have a common coincidence point.
\end{theorem}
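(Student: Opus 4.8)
The plan is to adapt the Lakshmikantham--\'Ciri\'c coincidence scheme underlying Theorem \ref{thm_cor_LC21} to the four-coordinate setting. First I would use the hypothesis $F(X^4)\subset g(X)$ to build four coupled iterative sequences. Starting from the given $x_0,y_0,z_0,w_0$, choose $x_1,y_1,z_1,w_1\in X$ with
\[
g(x_1)=F(x_0,y_0,z_0,w_0),\quad g(y_1)=F(x_0,w_0,z_0,y_0),
\]
\[
g(z_1)=F(z_0,y_0,x_0,w_0),\quad g(w_1)=F(z_0,w_0,x_0,y_0),
\]
and continue inductively, so that $g(x_{n+1})=F(x_n,y_n,z_n,w_n)$ together with the three analogous relations for $y,z,w$ hold for every $n$. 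Each choice is legitimate since the right-hand sides lie in $F(X^4)\subset g(X)$; the sought coincidence point will be the common limit of these four sequences.

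The second step is to establish the monotonicity that drives the whole argument. By induction on $n$, using the mixed $g$-monotone property (Definition \ref{mmp}) together with the comparison hypotheses \eqref{supposegg} at the base step, I would show that $\{g(x_n)\}$ and $\{g(z_n)\}$ are non-decreasing while $\{g(y_n)\}$ and $\{g(w_n)\}$ are non-increasing. This is where the four sequences must be coordinated consistently with the four order constraints $g(x)\le g(u)$, $g(y)\ge g(v)$, $g(z)\le g(r)$, $g(w)\ge g(t)$ appearing in \eqref{thm1eqmain}; tracking the sign of each monotonicity implication is the bookkeeping that makes the contraction applicable at every step. Granting this, set
\[
\gamma_n:=d(g(x_n),g(x_{n+1}))+d(g(y_n),g(y_{n+1}))+d(g(z_n),g(z_{n+1}))+d(g(w_n),g(w_{n+1})).
\]
Applying \eqref{thm1eqmain} to each of the four defining relations, the orderings now being exactly the consecutive comparisons just proved, and summing, gives $\gamma_{n+1}\le 4\,\phi(\gamma_n/4)$. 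Since $\phi(t)<t$, the sequence $\{\gamma_n\}$ is non-increasing and bounded below, hence convergent, and the continuity of $\phi$ forces the limit to be $0$, so $\gamma_n\to 0$.

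The technical heart is showing that the four sequences are Cauchy, which I would do by contradiction in the induced product metric $\rho$. Writing $\Delta_{m,n}$ for the sum of the four pairwise distances between the $m$-th and $n$-th terms, I would assume $\Delta$ does not tend to $0$, extract indices $m_k>n_k$ with $\Delta_{m_k,n_k}\ge\varepsilon$ chosen minimally, and combine the triangle inequality with $\gamma_n\to 0$ to get $\Delta_{m_k,n_k}\to\varepsilon$. Monotonicity gives $g(x_{n_k})\le g(x_{m_k})$, $g(y_{n_k})\ge g(y_{m_k})$, and so on, so \eqref{thm1eqmain} applies coordinatewise and yields $\Delta_{m_k+1,n_k+1}\le 4\,\phi(\Delta_{m_k,n_k}/4)$; feeding this back through the triangle inequality and letting $k\to\infty$ produces $\varepsilon\le 4\lim_{r\to(\varepsilon/4)+}\phi(r)<\varepsilon$, a contradiction. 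This is the step I expect to be the main obstacle, and it is exactly where the defining properties of the class $\Phi$ (continuity and the strict right-limit bound) are indispensable. By completeness of $(X,d)$ the four sequences then converge; denote their limits by $x,y,z,w$.

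Finally I would pass to the limit to identify $(x,y,z,w)$ as a coincidence point. Under alternative $(a)$ I would apply $g$ to the iteration, use commutativity \eqref{commuting} to write $g(g(x_{n+1}))=F(g(x_n),g(y_n),g(z_n),g(w_n))$, and let $n\to\infty$: continuity of $g$ sends the left side to $g(x)$, continuity of $F$ sends the right side to $F(x,y,z,w)$, so $g(x)=F(x,y,z,w)$, and the three companion identities follow verbatim from the other three iterations. Under alternative $(b)$ I would instead invoke the order-limit properties $(i)$--$(ii)$ applied to the monotone sequences $\{g(x_n)\},\dots,\{g(w_n)\}$ to obtain the comparisons needed to apply \eqref{thm1eqmain} between the iterates and the limit, and then bound $d(g(x),F(x,y,z,w))$ by inserting the intermediate term $g(x_{n+1})=F(x_n,y_n,z_n,w_n)$ and letting $n\to\infty$; the contraction together with $\gamma_n\to0$ forces this distance to vanish. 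Both alternatives use the continuity of $g$, as in Theorem \ref{thm_cor_LC21}, and reconciling the $g$-comparisons demanded by \eqref{thm1eqmain} with what the order-limit hypothesis supplies is the one delicate point in case $(b)$.
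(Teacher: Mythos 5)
Your proposal follows the paper's own proof essentially step for step: the same four iterates built from $F(X^4)\subset g(X)$, the same monotonicity induction, the same quantity $\delta_n$ satisfying $\delta_{n+1}\le 4\phi(\delta_n/4)$, the same Cauchy-by-contradiction argument with minimally chosen indices, and the same closing case analysis for $(a)$ and $(b)$. So the strategy is identical; the only question is whether the step you explicitly wave through as ``bookkeeping'' actually works.

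It does not, and this is a genuine gap (one the paper itself shares). With the iteration you adopt from the theorem statement, $g(y_{n+1})=F(x_n,w_n,z_n,y_n)$ and $g(w_{n+1})=F(z_n,w_n,x_n,y_n)$, the mixed $g$-monotone property (\ref{ommp}) forces these sequences to move the wrong way. Under the inductive hypothesis $g(x_{n-1})\le g(x_n)$, $g(y_{n-1})\ge g(y_n)$, $g(z_{n-1})\le g(z_n)$, $g(w_{n-1})\ge g(w_n)$, each of the four argument changes in passing from $F(x_{n-1},w_{n-1},z_{n-1},y_{n-1})$ to $F(x_n,w_n,z_n,y_n)$ pushes the value of $F$ \emph{upward}: the first and third arguments increase and $F$ is $g$-non-decreasing there, while the second and fourth arguments decrease and $F$ is $g$-non-increasing there. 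Hence $g(y_{n+1})\ge g(y_n)$ (and likewise $g(w_{n+1})\ge g(w_n)$), the opposite of what your induction must propagate; after the base step the $y$- and $w$-sequences cannot be shown non-increasing, the comparability needed to apply (\ref{thm1eqmain}) to consecutive iterates is lost, and the $\delta_n$ estimate, the Cauchy argument, and the limit identification all collapse with it. The paper conceals the same defect: it asserts the inequality in (\ref{sequence1a}) without verification and then silently switches, in (\ref{seqyn})--(\ref{seqwn}), to the cyclic iterates $F(y_n,z_n,w_n,x_n)$, $F(z_n,w_n,x_n,y_n)$, $F(w_n,x_n,y_n,z_n)$, which are inconsistent with (\ref{sequence}). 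The induction does close for that cyclic scheme, since there the first and third slots receive decreasing sequences and the second and fourth slots increasing ones; that is the correct repair, but it requires restating the hypotheses (\ref{supposegg}) and the conclusion in the cyclic form of Definition \ref{defntfp}. As written, your argument, like the paper's, breaks at the monotonicity step.
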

\begin{proof}
Let $x_0,y_0,z_0,w_0 \in X$ be such that (\ref{supposegg}).
We construct the sequences $\{x_n\}$, $\{y_n\}$, $\{z_n\}$ and $\{w_n\}$ as follows
\begin{equation}
\begin{array}{c}
g(x_n)= F(x_{n-1},y_{n-1},z_{n-1},w_{n-1}),\\
g(y_n)=F(x_{n-1},w_{n-1},z_{n-1},y_{n-1}) ,\\
g(z_n)=F(z_{n-1},y_{n-1},x_{n-1},w_{n-1}),\\
g(w_n)=F(z_{n-1},w_{n-1},x_{n-1},y_{n-1}) .\\
\end{array}
\label{sequence}
\end{equation}
for $n = 1,2,3, ....$.\\
We claim that
\begin{equation}
\begin{array}{rl}
g( x_{n-1}) \leq g(x_n), &  g( y_{n-1}) \geq g(y_n),\\
g(z_{n-1}) \leq g(z_n), &  g( w_{n-1}) \geq g(w_n), \mbox{ for all } n\geq 1.\\
\end{array}
\label{sequence1}
\end{equation}
Indeed, we shall use mathematical induction to prove (\ref{sequence1}).
Due to (\ref{supposegg}), we have
\[
\begin{array}{c}
g(x_0)\leq F(x_0,y_0,z_0,w_0)=g(x_1),
\ \ g(y_0) \geq F(x_,w_0,z_0,y_0)=g(y_1), \\
\ \  g(z_0) \leq F(z_0,y_0,x_0,w_0)=g(z_1),
\ \ g(w_0) \geq F(z_0,w_0,x_0,y_0)=g(w_1).\\
\end{array}
\]
 Thus, the inequalities in (\ref{sequence1}) hold for $n=1$.
 Suppose now that the inequalities in (\ref{sequence1}) hold for some $n \geq 1$.
By mixed $g$-monotone property of $F$, together with (\ref{sequence}) and
(\ref{ommp}) we have
\begin{equation}
\begin{array}{c}
g(x_{n})=F(x_{n-1},y_{n-1},z_{n-1},w_{n-1}) \leq F(x_{n},y_{n},z_{n},w_{n}) =g(x_{n+1}), \\
g( y_{n})= F(x_{n-1},w_{n-1},z_{n-1},y_{n-1}) \geq F(x_{n},w_{n},z_{n},y_{n})=g(y_{n+1}),\\
g(z_{n})=F(z_{n-1},y_{n-1},x_{n-1},w_{n-1}) \leq F(z_{n},y_{n},x_{n},w_{n})=g(z_{n+1}),   \\
g( w_{n})=F(z_{n-1},w_{n-1},x_{n-1},y_{n-1}) \geq F(z_{n-1},w_{n-1},x_{n-1},y_{n-1})=g(w_{n+1}), \\
\end{array}
\label{sequence1a}
\end{equation}
Thus, (\ref{sequence1}) holds for all $n\geq 1$.
Hence, we have
\begin{equation}
\begin{array}{c}
\cdots g(x_n) \geq g(x_{n-1}) \geq \cdots \geq g(x_1) \geq g(x_0),\\
\cdots g(y_n)\leq g(y_{n-1})\leq \cdots \leq g(y_1) \leq g(y_0),\\
\cdots g(z_n) \geq g(z_{n-1}) \geq \cdots \geq g(z_1) \geq g(z_0),\\
\cdots g(w_n)\leq g(w_{n-1})\leq \cdots \leq g(w_1)\leq g(w_0),\\
\end{array}
\label{g_sequence}
\end{equation}

Set $\begin{array}{c}
\delta_n=d(g(x_n),g(x_{n+1}))+d(g(y_n),g(y_{n+1}))+d(g(z_n),g(z_{n+1}))\\
+d(g(w_n),g(w_{n+1}))
\end{array}$

We shall show that
\begin{equation}
\delta_{n+1} \leq 4\phi(\frac{\delta_{n}}{4}).
\label{g_delta}
\end{equation}

Due to (\ref{thm1eqmain}), (\ref{sequence}) and (\ref{g_sequence}), we have
\begin{equation}
\begin{array}{rl}
d(g(x_{n+1}),g(x_{n+2})) &= d(F(x_{n},y_{n},z_{n},w_n),F(x_{n+1},y_{n+1},z_{n+1},w_{n+1})) \\
& \phi\left(\frac{d(g(x_{n}),g(x_{n+1}))+d(g(y_{n}),g(y_{n+1}))+d(g(z_n),g(z_{n+1}))+d(g(w_n),g(w_{n+1}))}{4}\right)\\
& \leq \phi(\frac{\delta_{n}}{4})
\end{array}
\label{seqxn}
\end{equation}

\begin{equation}
\begin{array}{rl}
d(g(y_{n+1}),g(y_{n+2})) &= d(F(y_n,z_n,w_n,x_n),F(y_{n+1},z_{n+1},w_{n+1},x_{n+1})) \\
& \leq \phi\left(\frac{d(g(y_n),g(y_{n+1}))+d(g(z_n),g(z_{n+1}))+d(g(w_n),g(w_{n+1}))+d(g(x_n),g(x_{n+1}))}{4}\right)\\
& \leq \phi(\frac{\delta_{n}}{4})
\end{array}
\label{seqyn}
\end{equation}

\begin{equation}
\begin{array}{rl}
d(g(z_{n+1}),g(z_{n+2})) &= d(F(z_n,w_n,x_n,y_n),F(z_{n+1},w_{n+1},x_{n+1},y_{n+1})) \\
& \leq \phi\left(\frac{d(g(z_n),g(z_{n+1}))+d(g(w_n),g(w_{n+1}))+d(g(x_n),g(x_{n+1}))+ d(g(y_n),g(y_{n+1}))}{4}\right)\\
& \leq \phi(\frac{\delta_{n}}{4})
\end{array}
\label{seqzn}
\end{equation}

\begin{equation}
\begin{array}{rl}
d(g(w_{n+1}),g(w_{n+2})) &= d(F(w_n,x_n,y_n,z_n),F(w_{n+1},x_{n+1},y_{n+1},z_{n+1})) \\
& \phi\left(\frac{d(g(w_n),g(w_{n+1}))+d(g(x_n),g(x_{n+1}))+d(g(y_n),g(y_{n+1}))+d(g(z_n),g(z_{n+1}))}{4}\right)\\
& \leq \phi(\frac{\delta_{n}}{4})
\end{array}
\label{seqwn}
\end{equation}
Due to (\ref{seqxn})-(\ref{seqwn}), we conclude that
\begin{equation}
d(x_{n+1},x_{n+2})+d(y_{n+1},y_{n+2})+d(z_{n+1},z_{n+2})+d(w_{n+1},w_{n+2})
  \leq 4 \phi(\frac{\delta_{n}}{4})
\label{total1}
\end{equation}
Hence we have (\ref{g_delta}).

Since $\phi(t)<t$ for all $t>0$, then $\delta_{n+1}\leq \delta_{n}$ for all $n$.
Hence $\{\delta_n\}$ is a non-increasing sequence. Since it is bounded below, there is some $\delta\geq 0$ such that
\begin{equation}
\lim_{n\rightarrow \infty}\delta_n=\delta+.
\label{deltalimit1}
\end{equation}
We shall show that $\delta=0$. Suppose, to the contrary, that $\delta>0$.
Taking the limit as $\delta_n \rightarrow \delta+$ of both sides of
 (\ref{g_delta}) and having in mind that we suppose
$\lim_{t \rightarrow r} \phi(r)<t$ for all $t>0$,  we have
\begin{equation}
\delta=\lim_{n \rightarrow \infty}\delta_{n+1} \leq \lim_{n \rightarrow \infty} 4\phi(\frac{\delta_n}{4})
=\lim_{\delta_n \rightarrow \delta+} 4\phi(\frac{\delta_n}{4})<4\frac{\delta}{4}<\delta
\label{lim2}
\end{equation}
which is a contradiction.
Thus, $\delta=0$, that is,
\begin{equation}
\lim_{n\rightarrow \infty}[d(x_n,x_{n-1})+d(y_n,y_{n-1})+d(z_n,z_{n-1})+d(w_n,w_{n-1})] =0.
\label{deltalimit}
\end{equation}

Now, we shall prove that $\{g(x_n)\}$,$\{g(y_n)\}$,$\{g(z_n)\}$  and $\{g(w_n)\}$ are Cauchy sequences.
Suppose, to the contrary, that at least one of $\{g(x_n)\}$,$\{g(y_n)\}$,$\{g(z_n)\}$  and $\{g(w_n)\}$
is not Cauchy.  So, there exists an $\varepsilon>0$ for which we can find
subsequences $\{g(x_{n(k)})\}$, $\{g(x_{n(k)})\}$ of $\{g(x_{n})\}$ and
$\{g(y_{n(k)})\}$, $\{g(y_{n(k)})\}$ of $\{g(y_{n})\}$
and
$\{g(z_{n(k)})\}$, $\{g(z_{n(k)})\}$ of $\{g(z_{n})\}$
and
$\{g(w_{n(k)})\}$, $\{g(w_{n(k)})\}$ of $\{g(w_{n})\}$
with $n(k)>m(k)\geq k$ such that
\begin{equation}
\begin{array}{r}
d(g(x_{n(k)}),g(x_{m(k)}))
+d(g(y_{n(k)}),g(y_{m(k)}))\\
+d(g(z_{n(k)}),g(z_{m(k)}))
+d(g(w_{n(k)}),g(w_{m(k)}))\geq \varepsilon.
\end{array}
\label{lim3}
\end{equation}
Additionally, corresponding to $m(k)$, we may choose $n(k)$
such that it is the smallest integer satisfying (\ref{lim3})
and
$n(k)>m(k)\geq k$. Thus,
\begin{equation}
\begin{array}{r}
d(g(x_{n(k)-1}),g(x_{m(k)}))
+d(g(y_{n(k)-1}),g(y_{m(k)}))\\
+d(g(z_{n(k)-1}),g(z_{m(k)}))
+d(g(w_{n(k)-1}),g(w_{m(k)}))< \varepsilon.
\end{array}
\label{lim4}
\end{equation}
By using triangle inequality and having  (\ref{lim3}),(\ref{lim4}) in mind
\begin{equation}
\begin{array}{rl}
\varepsilon &\leq t_k=:
d(g(x_{n(k)}),g(x_{m(k)}))
+d(g(y_{n(k)}),g(y_{m(k)}))\\
&+d(g(z_{n(k)}),g(z_{m(k)}))
+d(g(w_{n(k)}),g(w_{m(k)})) \\
&
\leq d(g(x_{n(k)}),g(x_{n(k)-1})) + d(g(x_{n(k)-1}),g(x_{m(k)}))\\
&+d(g(y_{n(k))},g(y_{n(k)-1}))+d(g(y_{n(k)-1}),g(y_{m(k)}))\\
& \ \ + d(g(z_{n(k)}),g(z_{n(k)-1}))+d(g(z_{n(k)-1}),g(z_{m(k)}))\\
&+ d(g(w_{n(k)}),g(w_{n(k)-1}))+d(g(w_{n(k)-1}),g(w_{m(k)})) \\
&<
d(g(x_{n(k)}),g(x_{n(k)-1})) +
d(g(y_{n(k)}),g(y_{n(k)-1}))+\\
&d(g(z_{n(k)}),g(z_{n(k)-1}))+
d(g(w_{n(k)}),g(w_{n(k)-1}))+ \varepsilon.
\end{array}
\label{lim5}
\end{equation}
Letting $k \rightarrow \infty$ in (\ref{lim5}) and using (\ref{deltalimit})
\begin{equation}
\lim_{k \rightarrow \infty}t_k=\lim_{k \rightarrow \infty}
\left[\begin{array}{c}
d(g(x_{n(k)}),g(x_{m(k)}))+d(g(y_{n(k)}),g(y_{m(k)}))\\
+d(g(z_{n(k)}),g(z_{m(k)}))+d(g(w_{n(k)}),g(w_{m(k)}))
\end{array}
\right]=\varepsilon+
\label{lim6}
\end{equation}
Again by triangle inequality,
\begin{equation}
\begin{array}{rl}
t_k&=d(g(x_{n(k)}),g(x_{m(k)}))+d(g(y_{n(k)}),g(y_{m(k)}))\\
&+d(g(z_{n(k)}),g(z_{m(k)}))+d(g(w_{n(k)}),g(w_{m(k)})) \\
& \leq d(g(x_{n(k)}),g(x_{n(k)+1}))+d(g(x_{n(k)+1}),g(x_{m(k)+1}))+d(g(x_{m(k)+1}),g(x_{m(k)}))\\
&\ +d(g(y_{n(k)}),g(y_{n(k)+1}))+d(g(y_{n(k)+1}),g(y_{m(k)+1}))+d(g(y_{m(k)+1}),g(y_{m(k)}))\\
&\ +d(g(z_{n(k)}),g(z_{n(k)+1}))+d(g(z_{n(k)+1}),g(z_{m(k)+1}))+d(g(z_{m(k)+1}),g(z_{m(k)}))\\
&\ +d(g(w_{n(k)}),g(w_{n(k)+1}))+d(g(w_{n(k)+1}),g(w_{m(k)+1}))+d(g(w_{m(k)+1}),g(w_{m(k)}))\\
&\leq \delta_{n(k)+1}+\delta_{m(k)+1}+d(g(x_{n(k)+1}),g(x_{m(k)+1}))+d(g(y_{n(k)+1}),g(y_{m(k)+1}))\\
& \ \ +d(g(z_{n(k)+1}),g(z_{m(k)+1}))+d(g(w_{n(k)+1}),g(w_{m(k)+1}))
\end{array}
\label{lim7}
\end{equation}
Since $n(k)>m(k)$, then
\begin{equation}
\begin{array}{c}
g(x_{n(k)}) \geq g(x_{m(k)}) \mbox{ and } g(y_{n(k)}) \leq g(y_{m(k)}),\\
g(z_{n(k)}) \geq g(z_{m(k)}) \mbox{ and } g(w_{n(k)}) \leq g(w_{m(k)}).\\
\end{array}
\label{lim8}
\end{equation}
Hence from (\ref{lim8}), (\ref{sequence}) and (\ref{thm1eqmain}), we have,

\begin{equation}
\begin{array}{rl}
d(g(x_{n(k)+1}),g(x_{m(k)+1})) &=  d(F({x_{n(k)}},{y_{n(k)}},{z_{n(k)}},{w_{n(k)}}),
 F({x_{m(k)}},{y_{m(k)}},{z_{m(k)}},{w_{m(k)}}))\\
& \leq \phi \left( \begin{array}{l}
 \frac{1}{4}[d(g({x_{n(k)}}),g({x_{m(k)}})) + d(g({y_{n(k)}}),g({y_{m(k)}})) \\
  + d(g({z_{n(k)}}),g({z_{m(k)}})) + d(g({w_{n(k)}}),g({w_{m(k)}})) ]\\
\end{array}\right)
\end{array}
\label{seqxnk}
\end{equation}

\begin{equation}
\begin{array}{rl}
d(g(y_{n(k)+1}),g(y_{m(k)+1})) &=  d(F({y_{n(k)}},{z_{n(k)}},{w_{n(k)}},{x_{n(k)}}),
 F({y_{m(k)}},{z_{m(k)}},{w_{m(k)}},{x_{m(k)}})) \\
& \leq \phi \left( \begin{array}{l}
  \frac{1}{4}[d(g({y_{n(k)}}),g({y_{m(k)}})) + d(g({z_{n(k)}}),g({z_{m(k)}})) \\
  + d(g({w_{n(k)}}),g({w_{m(k)}})) + d(g({x_{n(k)}},{x_{m(k)}})) ]\\
 \end{array} \right)\\
\end{array}
\label{seqynk}
\end{equation}

\begin{equation}
\begin{array}{rl}
d(g(z_{n(k)+1}),g(z_{m(k)+1})) &= d(F({z_{n(k)}},{w_{n(k)}},{x_{n(k)}},{y_{n(k)}}),
 F({z_{m(k)}},{w_{m(k)}},{x_{m(k)}},{y_{m(k)}})) \\
& \leq \phi \left(  \frac{1}{4}[\begin{array}{l}
 d(g(z_{n(k)}),g({z_{m(k)}})) + d(g({w_{n(k)}}),g({w_{m(k)}})) \\
  + d(g({x_{n(k)}}),g({x_{m(k)}})) + dg(({y_{n(k)}}),g({y_{m(k)}})) ]\\
 \end{array} \right)\\
\end{array}
\label{seqznk}
\end{equation}

\begin{equation}
\begin{array}{rl}
d(g(w_{n(k)+1}),g(w_{m(k)+1}) &=
 d(F({w_{n(k)}},{x_{n(k)}},{y_{n(k)}},{z_{n(k)}}),
 F({w_{m(k)}},{x_{m(k)}},{y_{m(k)}},{z_{m(k)}}) \\
& \leq \phi \left( \begin{array}{l}
  \frac{1}{4}[d(g({w_{n(k)}}),g({w_{m(k)}})) + d(g({x_{n(k)}}),g({x_{m(k)}})) \\
  + d(g({y_{n(k)}}),g({y_{m(k)}})) + d(g({z_{n(k)}}),g({z_{m(k)}}))] \\
 \end{array} \right)\\
\end{array}
\label{seqwnk}
\end{equation}
Combining (\ref{lim7}) with (\ref{seqxnk})-(\ref{seqwnk}), we obtain that
\begin{equation}
\begin{array}{rl}
t_k &\leq \delta_{n(k)+1}+\delta_{m(k)+1}+d(g(x_{n(k)+1}),g(x_{m(k)+1})
+d(g(y_{n(k)+1}),g(y_{m(k)+1}))\\
& \ \ +d(g(z_{n(k)+1}),g(z_{m(k)+1}))+d(g(w_{n(k)+1}),g(w_{m(k)+1}))) \\
& \leq \delta_{n(k)+1}+\delta_{m(k)+1}+t_k+4\phi\left(\frac{t_k}{4}\right)\\
& < \delta_{n(k)+1}+\delta_{m(k)+1}+t_k+4\frac{t_k}{4}
\end{array}
\end{equation}
Letting $k \rightarrow \infty$, we get a contradiction.
This shows that  $\{g(x_n)\}$,$\{g(y_n)\}$ ,$\{g(z_n)\}$  and $\{g(w_n)\}$  are Cauchy sequences. Since $X$ is complete
metric space, there exists $x,y,z,w \in X$ such that
\begin{equation}
\begin{array}{c}
\lim_{n \rightarrow \infty}g(x_n) =x \mbox{ and } \lim_{n \rightarrow \infty}g(y_n) =y,\\
\lim_{n \rightarrow \infty}g(z_n )=z \mbox{ and } \lim_{n \rightarrow \infty}g(w_n) =w.\\
\end{array}
\label{elma}
\end{equation}

Since $g$ is continuous, (\ref{elma}) implies that
\begin{equation}
\begin{array}{c}
\lim_{n \rightarrow \infty}g(g(x_n)) =g(x) \mbox{ and } \lim_{n \rightarrow \infty}g(g(y_n)) =g(y),\\
\lim_{n \rightarrow \infty} g(g(z_n ))=g(z) \mbox{ and } \lim_{n \rightarrow \infty}g(g(w_n)) =g(w).\\
\end{array}
\label{elma_g}
\end{equation}

From (\ref{sequence1a}) and by regarding commutativity of $F$ and $g$,
\begin{equation}
\begin{array}{c}
g(g(x_{n+1})) =g(F(x_n,y_n,z_n,w_n))=F(g(x_n),g(y_n),g(z_n),g(w_n)), \\
g(g(y_{n+1})) =g(F(x_n,w_n,z_n,y_n))=F(g(x_n),g(w_n),g(z_n),g(y_n)), \\
g(g(z_{n+1})) =g(F(z_n,y_n,x_n,w_n))=F(g(z_n),g(y_n),g(x_n),g(w_n)), \\
g(g(w_{n+1})) =g(F(z_n,w_n,x_n,y_n))=F(g(z_n),g(w_n),g(x_n),g(y_n)), \\
\end{array}
\label{elma_gF}
\end{equation}

We shall show that
\[
\begin{array}{c}
F(x,y,z,w)=g(x), \ \ \ \ F(x,w,z,y)=g(y), \\
F(z,y,x,w)=g(z), \ \ \ \ F(z,w,x,y)=g(w).
\end{array}
\]
Suppose now  $(a)$ holds. Then by (\ref{sequence}),(\ref{elma_gF}) and (\ref{elma}),
we have
\begin{equation}
\begin{array}{rl}
g(x)&=\displaystyle\lim_{n \rightarrow \infty} g(g(x_{n+1})) = \lim_{n \rightarrow \infty} g(F(x_n,y_n,z_n,w_n))\\
&=\displaystyle \lim_{n \rightarrow \infty} F(g(x_n),g(y_n),g(z_n),g(w_n))\\
&=  F(\displaystyle\lim_{n \rightarrow \infty}g(x_n),
\displaystyle\lim_{n \rightarrow \infty} g(y_n),
\displaystyle \lim_{n \rightarrow \infty}g(z_n),
\displaystyle \lim_{n \rightarrow \infty}g(w_n))\\
&=F(x,y,z,w)\\
\end{array}
\label{sequencecont}
\end{equation}
Analogously, we also observe that
\begin{equation}
\begin{array}{rl}
g(y)&=\displaystyle\lim_{n \rightarrow \infty} g(g(y_{n+1})) = \lim_{n \rightarrow \infty} g(F(x_n,w_n,z_n,y_n)\\
&=\displaystyle \lim_{n \rightarrow \infty}F(g(x_n),g(w_n),g(z_n),g(y_n))\\
&=  F(\displaystyle\lim_{n \rightarrow \infty}g(x_n),
\displaystyle\lim_{n \rightarrow \infty} g(w_n),
\displaystyle \lim_{n \rightarrow \infty}g(z_n),
\displaystyle \lim_{n \rightarrow \infty}g(y_n))\\
&=F(x,w,z,y)\\
\end{array}
\label{sequencecont1}
\end{equation}
\begin{equation}
\begin{array}{rl}
g(z)&=\displaystyle\lim_{n \rightarrow \infty} g(g(z_{n+1})) = \lim_{n \rightarrow \infty} g(F(z_n,y_n,x_n,w_n))\\
&=\displaystyle \lim_{n \rightarrow \infty}F(g(z_n),g(y_n),g(x_n),g(w_n))\\
&=  F(\displaystyle\lim_{n \rightarrow \infty}g(z_n),
\displaystyle\lim_{n \rightarrow \infty} g(y_n),
\displaystyle \lim_{n \rightarrow \infty}g(x_n),
\displaystyle \lim_{n \rightarrow \infty}g(w_n))\\
&=F(z,y,x,w)\\
\end{array}
\label{sequencecont2}
\end{equation}
\begin{equation}
\begin{array}{rl}
g(w)&=\displaystyle\lim_{n \rightarrow \infty} g(g(w_{n+1})) = \lim_{n \rightarrow \infty} g(F(z_n,w_n,x_n,y_n))\\
&=\displaystyle \lim_{n \rightarrow \infty} F(g(z_n),g(w_n),g(x_n),g(y_n))\\
&=  F(\displaystyle\lim_{n \rightarrow \infty}g(z_n),
\displaystyle\lim_{n \rightarrow \infty} g(w_n),
\displaystyle \lim_{n \rightarrow \infty}g(x_n),
\displaystyle \lim_{n \rightarrow \infty}g(y_n))\\
&=F(z,w,x,y)\\
\end{array}
\label{sequencecont3}
\end{equation}

Thus, we have
\[
\begin{array}{c}
F(x,y,z,w)=g(x), \ \ \ \ F(y,z,w,x)=g(y), \\
F(z,,w,x,y)=g(z), \ \ \ \ F(w,x,y,z)=g(w).
\end{array}
\]
Suppose now the assumption $(b)$ holds. Since $\{g(x_n)\}, \ \{g(z_n)\}$ is non-decreasing and
$g(x_n) \rightarrow x, \ g(z_n) \rightarrow z$ and also
$\{g(y_n)\}, \ \{g(w_n)\}$ is non-increasing and
$g(y_n) \rightarrow y, \ g(w_n) \rightarrow$, then by assumption $(b)$ we have
\begin{equation}
g(x_n) \geq x, \ \ g(y_n) \leq y, \ \ g(z_n) \geq z, \ \ g(w_n) \leq w
\label{sondakka1}
\end{equation}
for all $n$.
Thus, by triangle inequality and (\ref{elma_gF})
\begin{equation}
\begin{array}{l}
d(g(x),F(x,y,z,w)) \leq d(g(x),g(g(x_{n+1})))+d(g(g(x_{n+1})),F(x,y,z,w))\\
\leq d(g(x),g(g(x_{n+1})))
+\phi\left(\frac{1}{4}\left[\begin{array}{c}d(g(g(x_n),g(x)))+d(g(g(y_n),g(y)))\\
+d(g(g(z_n),g(z)))+d(g(g(w_n),g(w)))\end{array}\right]\right)\\
\end{array}
\label{sondakka}
\end{equation}
Letting $n\rightarrow \infty$ implies that $d(g(x),F(x,y,z,w))\leq 0$.
Hence, $g(x)=F(x,y,z,w)$.
Analogously we can get that

\[F(y,z,w,x)=g(y), F(z,w,x,y)=g(z)  \mbox{  and  } F(w,x,y,z)=g(w).\]

Thus, we proved that $F$ and $g$ have a quartet coincidence point.
\end{proof}


\end{document}